\newtheorem{theorem}{Theorem}
\newcommand{\Z}{{\mathbb Z}}
\newcommand{\T}{{\mathbb T}}
\title{Inhomogeneous Partition Regularity}
\author{Imre Leader\footnote{Centre for Mathematical Sciences,
Wilberforce Road,
Cambridge CB3 0WB,
UK,  {\tt I.Leader@dpmms.cam.ac.uk}}\and 
Paul A.~Russell\footnote{Churchill College, Cambridge CB3 0DS, UK,
  {\tt P.A.Russell@dpmms.cam.ac.uk}}}
\begin{document}
\maketitle

\begin{abstract}
We say that the system of equations $Ax=b$, where $A$ is an integer matrix and
$b$ is a (non-zero) integer vector, is {\it partition regular} if whenever the
integers are finitely coloured there is a monochromatic vector $x$ with $Ax=b$.
Rado proved that the system $Ax=b$ is partition regular if and only if it has
a constant solution. 

Byszewski and Krawczyk asked if this remains true when the integers are replaced
by a general ring $R$. Our aim in this note is to answer this question in
the affirmative. The main ingredient is a new `direct' proof of Rado's result.
\end{abstract}

\begin{section}{Introduction}

The study of the Ramsey properties of inhomogeneous linear equations goes back to Rado
in the 1930s. Given an $m \times n$ matrix $A$, with integer entries, and a non-zero
vector $b \in \Z^m$, we say that the system of equations $Ax=b$ is {\it partition
regular}, or {\it partition regular over $\Z$}, if whenever the integers are finitely
coloured there is a vector $x \in \Z^n$, with all of its entries having the same colour,
such that $Ax=b$. 

There is a trivial sufficient condition for the system $Ax=b$ to be partition
regular, namely that it admits a constant solution, meaning an integer vector $x$ with all 
of
its entries the same. Rado \cite{R} showed that, in fact, this sufficient condition is also
necessary. We mention in passing that Rado also investigated the question of when there
is a monochromatic vector $x$ with all entries {\it positive} such that $Ax=b$, obtaining
necessary and suffiicent conditions in this case as well -- see \cite{R}. There is also a very
large literature on the homogeneous system $Ax=0$, also going back to \cite{R} -- see
Graham, Rothschild and Spencer \cite{GRS}.  

Bergelson, Deuber, Hindman and Lefmann \cite{BDHL} considered what happens in general 
(commutative) rings.
So now $A$ is an $m \times n$ matrix with entries from a ring $R$, and $b$ is a non-zero
element of $R^m$, and we say that the system $Ax=b$ is {\it partition regular over $R$}
if, whenever $R$ is finitely coloured, there is a monochromatic vector $x \in R^n$ with
$Ax=b$. The interesting question is: when this is equivalent to the condition 
that there is a constant solution to $Ax=b$? They showed that, as with $\Z$, this 
is indeed the case for a
class of integral domains, namely those integral domains that are not fields but have the
property that $R/(r)$ is finite for every non-zero $r \in R$. Here as usual $(r)$ denotes
the ideal generated by $r$, namely $\{tr: t \in R \}$.

This work was considerably extended by Byszewski and 
Krawczyk \cite{BK}. They showed that the 
result is true for every integral domain, and also for some other cases (such as for 
reduced rings, meaning rings with no nilpotent elements, satisfying a certain extra 
condition on their prime ideals). They asked if the result holds for all rings.
More generally, they also considered what happens if the vector $b$ has entries not from
$R$ but from some $R$-module $M$ (and we are finitely colouring $M$). They showed
that the condition that there is a constant solution is again equivalent, when $R$ is
an integral domain and $M$ is torsion-free, and also for any $R$-module $M$ when $R$ is
a Dedekind domain. Again, they asked if this condition is always equivalent (for any ring
$R$ and any $R$-module $M$) to the condition that there is a constant solution.

Our aim in this note is to answer these questions in the affirmative. The key new idea is
a `direct' approach to Rado's result. Rado himself proved his result by first proving it
for a single equation (i.e.~the case $m=1$), and then showing how one may pass from a 
single equation to the
case of several equations. The work in \cite{BDHL} proceeded along the same lines, as do the
results in \cite{BK} -- indeed, one of the most elegant results in \cite{BK} is 
that if we are dealing
with a single equation then the condition of partition regularity is always (for any $R$ and
$M$) equivalent to the existence of a constant solution. The work then comes (in \cite{BDHL} and
\cite{BK}) in the attempt to use this to build up to the case of many equations. 
Here, in contrast, we consider
the equations `all together'. We present a new very short proof of Rado's result that is
direct (in other words, not going via the case of a single
equation). And this proof, interpreted suitably, generalises to work 
for any ring $R$. 

The plan of the paper is as follows. In Section 2 we give our proof of Rado's result,
and in Section 3 we generalise to arbitrary rings (and also arbitrary modules over those
rings).
 
\end{section}

\begin{section}{A new proof of Rado's result}

For completeness we restate Rado's result.

\begin{theorem} (Rado \cite{R}). Let $A$ be an $m \times n$ integer matrix and let 
$b \in \Z^m$ be non-zero. Then the system of equations $Ax=b$ is partition regular if and
only if it has a constant solution.
\end{theorem}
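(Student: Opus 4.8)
The \emph{if} direction is immediate: if $c\mathbf{1}$ (the vector all of whose $n$ entries equal $c$) satisfies $A(c\mathbf{1})=b$, then in any finite colouring of $\Z$ the element $c$ receives some colour and $c\mathbf{1}$ is a monochromatic solution. So the substance is the \emph{only if} direction, which I would establish in contrapositive form: assuming $Ax=b$ has no constant solution, I would exhibit a finite colouring of $\Z$ with no monochromatic solution. Write $\mathbf{1}\in\Z^n$ for the all-ones vector and put $s=A\mathbf{1}\in\Z^m$. A constant solution is precisely a vector $c\mathbf{1}$ with $cs=b$, so the hypothesis says exactly that $b\notin\Z s:=\{cs:c\in\Z\}$.

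The colouring I would use is reduction modulo a well-chosen integer $N\geq1$: colour each integer $t$ by its residue $t\bmod N$. If $x\in\Z^n$ were a monochromatic solution to $Ax=b$, all of its entries lying in the residue class $c$, then $x=c\mathbf{1}+Ny$ for some $y\in\Z^n$, so $b=Ax=cs+N(Ay)$ and hence $b\in\Z s+N\Z^m$. Thus there is no monochromatic solution as soon as $N$ is chosen so that $b\notin\Z s+N\Z^m$, and the whole proof comes down to producing such an $N$.

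It therefore remains to check the elementary fact that, if $b\notin\Z s$, then $b\notin\Z s+N\Z^m$ for some $N$. In the quotient $M=\Z^m/\Z s$ this just asks for a finite-index subgroup avoiding the image $\bar b\neq0$ of $b$, i.e.\ residual finiteness of the finitely generated abelian group $M$; if one prefers to avoid quoting that, it is a short case analysis (if some coordinate has $s_i=0\neq b_i$, take $N$ not dividing $b_i$; if $b\notin\Q s$, pick coordinates $i,j$ with $b_is_j\neq b_js_i$ and take $N$ not dividing $b_is_j-b_js_i$; and if $b=rs$ for a non-integer rational $r$, take $N$ a suitable power of a prime dividing the denominator of $r$ and rule out a solution by comparing $p$-adic valuations). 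The one step warranting care is exactly that last case, where the obstruction is $p$-adic rather than visible over a single small modulus; everything else is bookkeeping. The point is that the argument never isolates an individual equation — it handles $Ax=b$ as a whole through the single vector $s=A\mathbf{1}$ — which is what makes it \emph{direct}, and, as Section~3 will bear out, what allows it to survive the passage to an arbitrary ring and module.
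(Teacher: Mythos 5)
Your proof is correct, and while it shares the paper's starting point --- the absence of a constant solution is precisely the statement $b \notin \Z s$, where $s$ is the sum of the columns of $A$, and one must then separate $b$ from $\Z s$ inside a finite quotient of $\Z^m$ --- your colouring and your verification are genuinely different from the paper's. The paper colours $t$ by the $n$-tuple $(\theta(c^{(1)}t),\ldots,\theta(c^{(n)}t))$ for a homomorphism $\theta$ from $\Z^m$ to $\Z_d$ that kills $\Z s$ but not $b$, and reaches a contradiction via a telescoping sum; you colour $t$ simply by its residue mod $N$, observe that a monochromatic solution $x=c\mathbf{1}+Ny$ forces $b \in \Z s + N\Z^m$, and reduce everything to the elementary fact that some $N$ makes this impossible. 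That fact is exactly residual finiteness of $\Z^m/\Z s$ (equivalently, $\bar b \notin N(\Z^m/\Z s)$ for some $N$, which follows from the structure theorem), and your bare-hands case analysis for it is sound, including the delicate $p$-adic case $b=rs$ with $r$ a non-integer rational. Over $\Z$ your route is if anything more elementary: the paper's colouring factors through $t \bmod d$, so your mod-$d$ colouring refines it, and you never need to look at the individual columns of $A$. What the paper's more elaborate product colouring buys is precisely Section 3: over an arbitrary ring $R$ there is no analogue of reduction mod $N$ that yields a finite colouring, and no structure theory for $R^m/Rs$, so one is forced to work with a character $\theta$ from $R^m$ to $\T$ and to colour $t$ by the (discretised) values of $\theta(c^{(i)}t)$ for each column separately. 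So your closing remark is right in spirit --- everything is channelled through the single vector $s=A\mathbf{1}$, with no reduction to a single equation --- but the specific mod-$N$ colouring does not itself survive the passage to a general ring; the paper's version is the one that does.
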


\begin{proof} 

Let $c^{(1)},\ldots,c^{(n)}$ be the columns of $A$, and write $s$ for their sum. Suppose
that there is no constant solution: this means that $b$ does not belong to the
subgroup $H$ of $\Z^m$ generated by $s$.

Now, there must exist a group homomorphism $\theta$ from $\Z^m$ to a finite cyclic group
$\Z_d$ (the integers modulo $d$) such that $\theta(H)=0$ and $\theta(b) \neq 0$. Indeed, 
the quotient $G/H$ is a product of cyclic groups (as it is finitely generated), and in this
quotient the image of $b$ is non-zero, so there is a map to one of the cyclic factors that
does not kill $b$. If this cyclic factor is finite we are done, while if it is infinite
we compose with the projection from $\Z$ to $\Z_d$ for a suitable $d$.

Define an colouring of the integers with $d^n$ colours by colouring $t \in \Z$ with the
$n$-tuple $(\theta(c^{(1)}t),\ldots,\theta(c^{(n)}t))$. Suppose that for this
colouring we have a monochromatic vector $x$ with $Ax=b$. We have that
$\theta( \sum c^{(i)} x_i) = \theta (b)$. Since $\theta(s)=0$, we have 
$\theta( \sum c^{(i)} x_1) = 0$, and so combining these we have
$\sum (\theta(c^{(i)} x_i) - \theta(c^{(i)}x_1)) = \theta(b)$. But this is a 
contradiction, as each term in the sum on the left-hand side is zero by our choice of
colouring.

\end{proof}

The reader familiar with the result of Straus \cite{S} on colourings of abelian groups
will notice a similarity with the product colouring above (although in \cite{S} there is
no $\theta$ to worry about). Indeed, in Byszewski and Krawczyk \cite{BK} it is 
Straus's result that is applied to prove their result for a single equation. The main
difference is that we are working in $\Z^m$ instead of $\Z$. In the next section, it will
also be important that above we did not `reduce to the case when the column sum is zero',
because there is (in general) no ring homomorphism that does this: this is why we work with
the group homomorphism $\theta$ directly. In contrast, if one is dealing with a single
equation then (as shown in \cite{BK}) one may first pass to the case when the column sum
is zero, and then apply Straus's result itself. 

\end{section}

\begin{section}{The result for general rings}

We now turn to our main result.

\begin{theorem} Let $A$ be an $m \times n$ matrix with entries in a ring $R$, and let 
$b \in R^m$ be non-zero. Then the system of equations $Ax=b$ is partition regular over $R$ if and
only if it has a constant solution.
\end{theorem}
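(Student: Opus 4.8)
The plan is to mimic the direct proof of Rado's result from Section 2, replacing the group homomorphism $\theta \colon \Z^m \to \Z_d$ by a suitable homomorphism of the underlying additive group of $R^m$. As before, let $c^{(1)}, \ldots, c^{(n)} \in R^m$ be the columns of $A$ and let $s = \sum_i c^{(i)}$. A constant solution exists precisely when $b$ lies in the subgroup $H = \{ ts : t \in R \}$ of the additive group $(R^m, +)$ — note that here $t$ ranges over $R$, not over $\Z$, but this is still a subgroup of $R^m$. So suppose there is no constant solution, i.e.\ $b \notin H$.

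The first key step is to produce an additive-group homomorphism $\theta \colon R^m \to Q$ to a \emph{finite} abelian group $Q$ with $\theta(H) = 0$ and $\theta(b) \neq 0$. For finitely generated groups this was immediate in Section 2, but $(R^m,+)$ need not be finitely generated, so I would instead argue: in the quotient group $R^m / H$ the image $\bar b$ of $b$ is a non-zero element, hence generates a non-trivial cyclic subgroup $\langle \bar b \rangle$, which is either $\Z$ or $\Z_k$ for some $k \ge 2$. It is a standard fact that any such cyclic group admits a homomorphism to a finite cyclic group that is non-zero on $\bar b$ (the identity on $\Z_k$, or reduction mod any $d \ge 2$ on $\Z$); the only genuine point to check is that this homomorphism on $\langle \bar b \rangle$ extends to all of $R^m / H$. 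This extension property holds because the target is a finite (hence divisible-by-nothing issue aside, \emph{injective}) abelian group — more precisely, $\Z_d$ with $d$ prime embeds in $\Q/\Z$, and $\Q/\Z$ is injective as a $\Z$-module, so any partial homomorphism into it extends; choosing $d$ prime dividing $k$ (or any prime, if $\langle \bar b\rangle \cong \Z$) and composing $R^m \to R^m/H$ with the extension gives the desired $\theta \colon R^m \to \Z_d =: Q$. I expect this extension step to be the main obstacle, since it is the one place where the passage from $\Z$ to a general ring really bites; everything else is formal.

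With $\theta$ in hand, the rest runs exactly as in Section 2. Colour $R$ with the finite palette $Q^n$ by giving $t \in R$ the colour $(\theta(c^{(1)} t), \ldots, \theta(c^{(n)} t))$, where $c^{(i)} t \in R^m$ is the scalar multiple. Note crucially that this colouring is well-defined and uses only finitely many colours since $Q$ is finite — and that we have \emph{not} needed any ring homomorphism, only the additive homomorphism $\theta$, which is exactly the feature of the Section 2 argument that the authors flagged as essential. Now suppose $x \in R^n$ is monochromatic with $Ax = b$, so $\sum_i c^{(i)} x_i = b$ and $\theta\bigl(\sum_i c^{(i)} x_i\bigr) = \theta(b) \neq 0$. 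Since $\theta(s) = \theta\bigl(\sum_i c^{(i)} x_1\bigr) = 0$ (as $s x_1 \in H$), subtracting gives $\sum_i \bigl(\theta(c^{(i)} x_i) - \theta(c^{(i)} x_1)\bigr) = \theta(b) \neq 0$. But $x$ being monochromatic means $\theta(c^{(i)} x_i) = \theta(c^{(i)} x_1)$ for every $i$ (this is the $i$-th coordinate of the common colour), so the left-hand side is $0$, a contradiction. Hence the colouring witnesses that $Ax = b$ is not partition regular over $R$, completing the proof. The converse — a constant solution trivially gives a monochromatic solution — needs no comment.
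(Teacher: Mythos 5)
Your overall strategy is the right one, and your final colouring argument would work verbatim if the homomorphism you want actually existed; the genuine gap is in the key step, namely the claim that the target group $Q$ can be taken to be \emph{finite}. Your extension argument does not deliver this: injectivity of $\Q/\Z$ lets you extend a homomorphism $\langle \bar b\rangle \to \Q/\Z$ to all of $R^m/H$, but the extension takes values in $\Q/\Z$, and its image on the whole of $R^m/H$ need not lie inside the finite subgroup $\Z_d \subseteq \Q/\Z$ you started from --- it can be infinite. Worse, in general there is \emph{no} homomorphism from $R^m/H$ to any finite group that is non-zero on $\bar b$: take $R=\Q$, $m=1$, $n=2$, $A=(1,-1)$, $b=1$, so that $s=0$, $H=0$ and $R^m/H=\Q$; since $\Q$ is divisible and the image of a divisible group is divisible, every homomorphism from $\Q$ to a finite group is trivial. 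For such rings your colouring either has infinitely many colours (if you keep the honest target $\Q/\Z$) or does not exist (if you insist on a finite one), so the proof breaks at exactly the point you flagged as the main obstacle.

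The paper repairs this in two moves. First, $\theta$ is allowed to take values in the circle $\T$: choosing a subgroup $K \supseteq H$ maximal subject to $b \notin K$, every non-trivial subgroup of $R^m/K$ contains the image of $b$, which forces $R^m/K$ to be cyclic of prime-power order or $\Z_{p^\infty}$, hence a subgroup of $\T$ --- but possibly infinite. Second, because $\theta$ may have infinite image, one colours $t$ not by the exact values $\theta(c^{(i)}t)$ but by which of $d$ arcs of length $2\pi/d$ each of them lands in, which is a finite colouring. Monochromaticity then gives only that each difference $\theta(c^{(i)}x_i)-\theta(c^{(i)}x_1)$ is \emph{close} to $0$ on the circle rather than exactly $0$, and the contradiction comes from taking $d$ large enough that a sum of $n$ such small terms cannot equal the fixed non-zero element $\theta(b)$. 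Everything else in your write-up --- the identification of $H$, the subtraction trick, the observation that only an additive (not ring) homomorphism is needed --- matches the paper.
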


\begin{proof}

As before, let $c^{(1)},\ldots,c^{(n)}$ be the columns of $A$, and write $s$ for their sum. Suppose
that there is no constant solution: this means that $b$ does not belong to the
subgroup $H$ of $R^m$ consisting of all $rs$, $r \in R$.

We claim that there is a group homomorphism $\theta$ from $R^m$ to the circle $\T$ such that
$\theta(H)=0$ and $\theta(b) \neq 0$. This is a standard piece of group theory: let us choose a
subgroup $K$ of $R^m$ that contains $H$ and is maximal subject to not containing the element $b$. 
Then the quotient map $\theta$ from $G$ to $G/K$ 
has $\theta(b)$ non-zero and also every non-trivial subgroup of
$G/K$ must contain $\theta(b)$ (by the maximality of $K$). It follows from this that $G/K$ is
either a cyclic group of prime-power order or else the group $\Z_{p^\infty}$ of all $p^k$-th roots
of unity for any $k$ (for some fixed prime $p$). In each case this is a subgroup of $\T$.

Now define a colouring of $R$ with $d^n$ colours, where $d$ is a large positive integer, by colouring
$t \in R$ with the
$n$-tuple $(f(\theta(c^{(1)}t)),\ldots,f(\theta(c^{(n)}t)))$, where $f$ is the map that sends
the interval of the circle with arguments in $[2 \pi j /d, 2 \pi (j+1)/d)$ to $j$, each 
$0 \leq j \leq d-1$.  
Suppose that for this
colouring we have a monochromatic vector $x$ with $Ax=b$. We have that
$\theta( \sum c^{(i)} x_i) = \theta (b)$. Since $\theta(sx_1)=0$ (as $sx_1 \in H$), we have 
$\theta( \sum c^{(i)} x_1) = 0$, and so combining these we have
$\sum (\theta(c^{(i)} x_i) - \theta(c^{(i)}x_1)) = \theta(b)$. But this is a 
contradiction for $d$ large, as each term in the sum on the left-hand size has argument in
$[0, 4 \pi /d]$ by the definition of the 
colouring.

\end{proof}

Finally, note that the above proof goes through verbatim (replacing the abelian group
$R^m$ by the abelian group $M^m$) for $R$-modules.

\begin{theorem} Let $M$ be a module over a ring $R$. Let $A$ be an $m \times n$ matrix with entries in 
$R$, and let 
$b \in M^m$ be non-zero. Then the system of equations $Ax=b$ is partition regular over $M$ if and
only if it has a constant solution.
\end{theorem}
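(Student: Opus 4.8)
The plan is to observe that the proof of the previous theorem (the general ring case) never used anything about $R^m$ beyond its structure as an abelian group, together with the facts that $b$ lies in $M^m$ and that the subgroup $H$ of `constant-solution-reachable' vectors is defined via multiplication of elements of $M$ by the ring elements appearing in $A$. So I would first set up the analogue of $H$: let $c^{(1)},\ldots,c^{(n)}$ be the columns of $A$ (each $c^{(i)} \in R^m$), let $s = \sum_i c^{(i)}$, and let $H$ be the set of all $sm$ with $m \in M$, i.e.\ the image of the module-action map $M \to M^m$, $m \mapsto sm = (s_1 m, \ldots, s_m m)$. A constant solution $x = (t,t,\ldots,t)$ with $t \in R$ and $Ax = b$ means exactly $b = \sum_i c^{(i)} t = st \in H'$, where $H'$ is the (possibly smaller) set $\{st : t \in R\}$; but in fact for the colouring argument what matters is that $H'$ is a subgroup of $M^m$ and that if there is no constant solution then $b \notin H'$. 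So I would set $H = \{st : t \in R\}$ (noting this is a subgroup of $M^m$ since $st + st' = s(t+t')$ and $-(st) = s(-t)$).

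Next, assuming there is no constant solution, so $b \notin H$, I would invoke the same piece of group theory as before: since $M^m$ is an abelian group and $b$ is a nonzero element not in the subgroup $H$, there is a group homomorphism $\theta : M^m \to \T$ with $\theta(H) = 0$ and $\theta(b) \neq 0$ — choose $K \supseteq H$ maximal subject to $b \notin K$, take $\theta$ to be the quotient map to $M^m/K$, and observe $M^m/K$ embeds in $\T$ (it is cyclic of prime-power order or $\Z_{p^\infty}$, exactly as argued previously). Then I would colour $t \in R$ — note we are colouring the ring $R$, not $M$; wait, here one must be slightly careful: the theorem says `partition regular over $M$', so we finitely colour $M$, and we seek a monochromatic solution vector $x$. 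But the solution vector $x$ has entries in... Here I should check: in $Ax = b$ with $A$ over $R$ and $b \in M^m$, the natural reading (consistent with \cite{BK}) is $x \in M^n$, so we colour $M$. Then I would colour $t \in M$ with the $n$-tuple $(f(\theta(c^{(1)} t)), \ldots, f(\theta(c^{(n)} t)))$, where $c^{(i)} t = (c^{(i)}_1 t, \ldots, c^{(i)}_m t) \in M^m$ makes sense via the module action, and $f$ is the same discretisation of $\T$ into $d$ arcs as before.

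Finally I would run the identical endgame: if $x \in M^n$ is monochromatic with $Ax = b$, then $\theta(\sum_i c^{(i)} x_i) = \theta(b)$, while $\theta(s x_1) = 0$ since $s x_1 \in H$, hence $\theta(\sum_i c^{(i)} x_1) = 0$, and subtracting gives $\sum_i (\theta(c^{(i)} x_i) - \theta(c^{(i)} x_1)) = \theta(b)$; but each summand has argument in $[-2\pi/d, 2\pi/d]$ (equivalently in an arc of length $4\pi/d$) because $x_i$ and $x_1$ get the same colour, so the left side has argument of absolute value at most $4\pi n/d$, which is less than the distance from $0$ to $\theta(b)$ once $d$ is large — a contradiction. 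The only genuine thing to check, and the one place I'd slow down, is that the module action $M \to M^m$, $t \mapsto c^{(i)} t$ really does make the colouring well-defined and the telescoping identity hold; this is routine since scalar multiplication by the fixed ring elements $c^{(i)}_j$ is additive in $t$. So there is essentially no new obstacle — the content is precisely the remark that the previous proof used only the abelian-group structure of the ambient object together with the $R$-action, and both survive replacing $R^m$ by $M^m$. I would therefore keep the proof to a sentence or two, exactly as the paper does.
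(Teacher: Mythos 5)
Your proposal is correct and is exactly the paper's argument: the paper proves this theorem simply by remarking that the proof of the general-ring case goes through verbatim with the abelian group $R^m$ replaced by $M^m$, which is precisely the observation you make and then carry out in detail (including the correct reading that $x\in M^n$ and one colours $M$). One small slip to fix: since the solution vector lies in $M^n$, the subgroup $H$ must be $\{st : t\in M\}$ (your first definition, the image of $t\mapsto st$), not $\{st : t\in R\}$, which is a subset of $R^m$ rather than of $M^m$; with that correction everything you write matches the paper.
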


\end{section}

\end{document}